\newtheorem{theorem}{Theorem}[section]
\newtheorem{corollary}[theorem]{Corollary}
\newtheorem{definition}[theorem]{Definition}
\newtheorem{example}[theorem]{Example}
\newtheorem{lemma}[theorem]{Lemma}
\newtheorem{remark}[theorem]{Remark}
\def\B{{\mathcal{B}}}
\def\R{{\mathcal{R}}}
\def\M{{\mathcal{M}}}
\def\N{{\mathcal{N}}}
\def\H{{\mathcal{H}}}
\def\mf{\mathfrak}
\newcommand{\cl}[1]{\overline{#1}}
\begin{document}

%

\title{Operators with compatible ranges\footnote{The manuscript is accepted for publication in FILOMAT}}

\author{Marko S. Djiki\'c\footnote{The author is supported by Grant No. 174007 of the  Ministry of Education, Science and Technological Development, Republic of Serbia.}\\
{\small Faculty of Sciences and Mathematics, University of Ni\v s, Serbia}
\\
{\small E-mail address: \textsf{marko.djikic@gmail.com}}}





\maketitle

\begin{abstract}
A bounded operator $T$ on a finite or infinite--dimensional Hilbert space is called a disjoint range (DR) operator if $\R(T)\cap \R(T^*)=\{0\}$, where $T^*$ stands for the adjoint of $T$, while $\R(\cdot)$ denotes the range of an operator. Such operators (matrices) were introduced and systematically studied by Baksalary and Trenkler, and later by Deng et al. In this paper we introduce a wider class of operators: we say that $T$ is a compatible range (CoR) operator if $T$ and $T^*$ coincide on $\R(T)\cap \R(T^*)$. We extend and improve some results about DR operators and derive some new results regarding the CoR class.
\\
\\
{\it AMS classification:} Primary 47A05; Secondary 15A09\\
{\it Keywords}: CoR operators, Moore-Penrose inverse

\end{abstract}


\section{Motivation and preliminaries}

It is well-known that interesting properties of a real or complex square matrix $A$ can be described through certain geometric relations between its column space and the column space of its adjoint matrix $A^*$. For example, the column spaces $\R(A)$ and $\R(A^*)$ coincide if and only if the matrix $A$ commutes with its Moore-Penrose generalized inverse $A^\dag$. Such matrices are known as EP matrices, and they were the subject of many research papers (see also \cite[Chapter 4]{BenIsrael}). Quite opposite, if $\R(A)\oplus \R(A^*)$ is equal to whole space, then and only then $AA^\dag - A^\dag A$ is nonsingular (hereafter, $\oplus$ denotes the direct, not necessarily orthogonal, sum). Such matrices are called co-EP matrices, and they were introduced and studied by Ben\'itez and Rako\v cevi\'c \cite{Benitez1}. Werner \cite{Werner} studied the pairs of matrices $A$ and $B$ with conveniently positioned column spaces: $\R(A)\cap \R(B)=\{0\}$ and $\R(A^*)\cap \R(B^*)=\{0\}$. It turns out that such matrices are particulary useful with joint systems of equations $Ax=a$, $Bx=b$, etc.

As a generalization of a class of co-EP matrices, Baksalary and Trenkler \cite{Baks1} introduced a new class of matrices which merits its own name: \emph{disjoint range} matrices. A matrix $A\in \mathbb{C}^{n\times n}$ is said to be a disjoint range (or DR) matrix if $\R(A)\cap \R(A^*)=\{0\}$. They proved many properties of such matrices, of their functions and appropriate Moore-Penrose inverses. However, their study was based on linear algebra techniques, which are not appropriate for infinite-dimensional Hilbert spaces. The study of DR matrices, i.e. operators on arbitrary Hilbert spaces was conducted by Deng et al. \cite{Deng2}. Among others, the authors in \cite{Deng2} studied the classes of operators described in the following definition.
\begin{definition}\label{djd1}
Let $\H$ be a Hilbert space, and $T$ a bounded linear closed range operator on $\H$. Then $T$ is:
\begin{itemize}
\item[1)] DR if $\R(T)\cap \R(T^*)=\{0\}$;
\item[2)] EP if $\R(T)=\R(T^*)$;
\item[3)] SR if $\R(T)+\R(T^*)=\H$;
\item[4)] co-EP if $\R(T)\oplus \R(T^*)=\H$;
\item[5)] weak-EP if $P_{\cl{\R(T)}}P_{\cl{\R(T^*)}} = P_{\cl{\R(T^*)}}P_{\cl{\R(T)}}$,
\end{itemize}
where $P_{\M}$ denotes the orthogonal projection onto a closed subspace $\M$.
\end{definition}

However, one very important class of operators is not fully contained in the union of the classes from Definition \ref{djd1}. Namely, if $P$ and $Q$ are two orthogonal projections on a Hilbert space $\H$, the operator $PQ$ need not to belong to any of the mentioned classes, and not only because its range need not to be closed (see Example \ref{djex1}). This is our main motivation to extend the DR class in the following way. Note that we do not ask for $T$ to have a closed range, although most of the presented results will deal with closed range operators.

\begin{definition}\label{djd2} Let $\H$ be a Hilbert space, and $T$ a bounded linear operator on $\H$. We say that $T$ is a \emph{compatible range operator} (CoR)  if $T$ and $T^*$ coincide on the set $\cl{\R(T)}\cap \cl{\R(T^*)}$.
\end{definition}
The second reason for such generalization comes from the results of \cite{Djikic, Djikic2, Djikic3, Djikic4} which describe different properties of operators $A$ and $B$ which coincide on $\cl{\R(A^*)}\cap \cl{\R(B^*)}$ (a generalization of Werener's condition of weak complementarity, see \cite{Werner}). Accordingly, we will present different properties of CoR operators, regarding range additivity, some additive results for the Moore-Penrose inverse, etc. We also extend and improve some properties of DR operators, and in the end we give a discussion regarding operators that are products of orthogonal projections.

 In the rest of this section we introduce some notation which is not yet mentioned, and we recall some notions. Throughout the paper, $\H$ will stand for an arbitrary complex Hilbert space, that can also be infinite-dimensional. The algebra of bounded operators on $\H$ will be denoted by $\B(\H)$, and if $A\in \B(\H)$ then $\N(A)$ stands for the null-space of $A$. If $\H = \M \oplus \N$, where $\M$ and $\N$ are closed subspaces, we say that this decomposition completely reduces $A$ if $\M$ and $\N$ are invariant subspaces for $A$. In that case, $A$ is an isomorphism if and only if the reductions of $A$ on $\M$ and $\N$ are both isomorphisms. We write $\M\ominus \N$ to denote $\M\cap \N^\bot$. If $A\in \B(\H)$ is such that $\R(A)$ is closed, then the following system of equations:
$$AXA=A, \quad XAX=X,\quad (AX)^*=AX,\quad (XA)^*=XA,$$ has a unique solution $A^\dag$, which is called the Moore-Penrose generalized inverse of $A$. In that case $AA^\dag$ is the orthogonal projection onto $\R(A)$ and $A^\dag A$ is the orthogonal projection onto $\R(A^*)$.

If for $A\in \B(\H)$ stands $\R(A)\oplus \N(A)=\H$ then the following system of equations has a unique solution: $AXA=A,\quad XAX=X,\quad XA=AX.$ Such $A$ is called group-invertible, and the solution of this system is called the group inverse of $A$. In \cite{Deng2}, the list of classes of operators to be studied contains the group-invertible operators as well. We did not include it in Definition \ref{djd1} since this class is not defined through interrelation between the ranges of an operator and its adjoint. We should however mention that the product of two orthogonal projections, provided the range is closed, is indeed group-invertible (see \cite[Theorem 4.1]{Corach3}). We also wish to emphasize that we abbreviated \emph{compatible range} as CoR, and not as CR, since CR is commonly used for the class of closed range operators.

\section{CoR operators}

The main framework for studying DR matrices and DR operators was established through certain space and operator decompositions. In \cite{Baks1} the Hartwig-Spindelb\"ock decomposition of matrix is used (see \cite{Hartwig4}), and in case of operators on arbitrary Hilbert space, the appropriate operator decomposition is used: if $T\in \B(\H)$ then
\begin{equation}\label{djeq1}
T = \begin{bmatrix} A & B \\ 0 & 0\end{bmatrix}: \begin{bmatrix}\cl{\R(T)}  \\ \N(T^*)\end{bmatrix} \to \begin{bmatrix} \cl{\R(T)}  \\ \N(T^*)\end{bmatrix}.
\end{equation}
The reader is referred to \cite[Lemma 1.2]{Djordjevic1} and the discussion therein for further properties of such decompositions.

If $T$ is a closed range operator, \cite[Theorem 3.5]{Deng2} gives necessary and sufficient conditions for $T$ to be DR, SR and co-EP operator, under the additional assumption that $\R(TT^\dag - T^\dag T)$ is closed (which will be the subject of Lemma \ref{djl1}). The main tool in that proof is the famous Halmos' two projections theorem (see \cite{Bottcher1, Halmos1}). However, this assumption is dispensable if we apply a more direct approach.

\begin{theorem}\label{djt2} Let $T\in \B(\H)$ be a closed range operator, with operators $A$ and $B$ defined as in \eqref{djeq1}. Then:
\begin{itemize}
\item[(1)] $T$ is DR if and only if $\cl{\R(B)} = \R(T)$;
\item[(2)] $T$ is SR if and only if $\R(B^*)=\N(T^*)$;
\item[(3)] $T$ is co-EP if and only if $B$ is invertible.
\end{itemize}
\end{theorem}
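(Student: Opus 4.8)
The plan is to carry out everything inside the block form \eqref{djeq1}. Since $T$ has closed range, $\cl{\R(T)}=\R(T)$, so \eqref{djeq1} is written with respect to the orthogonal decomposition $\H=\R(T)\oplus\N(T^*)$, and the first thing I would do is record the matching block form of the adjoint, $T^*=\begin{bmatrix}A^*&0\\B^*&0\end{bmatrix}$ with respect to the same decomposition, where $A^*\in\B(\R(T))$ and $B^*\colon\R(T)\to\N(T^*)$. The one genuine observation at this stage is that, seen inside $\R(T)\oplus\N(T^*)$, the subspace $\R(T)$ is precisely the first summand: $\R(T)$ equals $\R(A)+\R(B)$ (the image of the block row $\bigl[\,A\ \ B\,\bigr]$), and since $\R(T)$ is closed this forces $\R(A)+\R(B)=\R(T)$ --- all of the first summand, not merely a dense subspace. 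Taking orthogonal complements inside $\R(T)$ this gives $\N(A^*)\cap\N(B^*)=\{0\}$, which I will need for part (1). Dually, $\R(T^*)=\{(A^*u,B^*u):u\in\R(T)\}$.

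Next I would compute the two subspaces appearing in Definition~\ref{djd1}. A vector $(w,0)\in\R(T)$ lies in $\R(T^*)$ exactly when $w=A^*u$ and $B^*u=0$ for some $u\in\R(T)$, whence $\R(T)\cap\R(T^*)=A^*\bigl(\N(B^*)\bigr)\oplus\{0\}$; and since the first summand of $\R(T)+\R(T^*)$ is already all of $\R(T)$ while its second summand is $\R(B^*)$, one gets $\R(T)+\R(T^*)=\R(T)\oplus\R(B^*)$ inside $\R(T)\oplus\N(T^*)$.

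The three equivalences then drop out. Part (2): $\R(T)+\R(T^*)=\H$ iff $\R(B^*)=\N(T^*)$, immediately. Part (1): $\R(T)\cap\R(T^*)=\{0\}$ iff $A^*$ vanishes on $\N(B^*)$, i.e.\ $\N(B^*)\subseteq\N(A^*)$; together with $\N(A^*)\cap\N(B^*)=\{0\}$ this is equivalent to $\N(B^*)=\{0\}$, which is exactly $\cl{\R(B)}=\R(T)$ since $\cl{\R(B)}$ is the orthogonal complement of $\N(B^*)$ inside $\R(T)$. Part (3): being co-EP means $\R(T)\oplus\R(T^*)=\H$, i.e.\ $T$ is simultaneously DR and SR, so by (1) and (2) this amounts to $B^*$ being both injective and surjective onto $\N(T^*)$, i.e.\ a bounded bijection of Hilbert spaces; the bounded inverse theorem then makes $B^*$, hence $B$, invertible, and the converse implication is trivial.

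The step I expect to be the only real subtlety is the forward direction of (1) --- excluding a nonzero part of $\N(B^*)$ on which $A^*$ happens to vanish --- and this is exactly where the closed range hypothesis on $T$ enters: it upgrades ``$\R(A)+\R(B)$ is dense in $\R(T)$'' to ``$\R(A)+\R(B)=\R(T)$'', which is what yields $\N(A^*)\cap\N(B^*)=\{0\}$. Everything else is bookkeeping with the $2\times2$ block form, where the main point of care is keeping track of which Hilbert space --- $\R(T)$ or $\N(T^*)$ --- each kernel, range, and orthogonal complement lives in.
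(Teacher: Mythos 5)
Your proof is correct and takes essentially the same route as the paper: the block form of $T^*$, the identity $\R(T)+\R(T^*)=\R(T)\oplus\R(B^*)$, and the reduction of (3) to (1) and (2). The only difference is cosmetic — for part (1) the paper works with the orthocomplement-dual formulation ($\cl{\R(A)}\subseteq\cl{\R(B)}$ together with $\cl{\R(A)}+\cl{\R(B)}=\R(T)$) where you work with kernels ($\N(B^*)\subseteq\N(A^*)$ together with $\N(A^*)\cap\N(B^*)=\{0\}$), which is the same argument.
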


\begin{proof}
(1) Since
$$T^* = \begin{bmatrix} A^* & 0 \\ B^* & 0\end{bmatrix}: \begin{bmatrix} \R(T) \\ \N(T^*)\end{bmatrix} \to \begin{bmatrix} \R(T) \\ \N(T^*)\end{bmatrix}$$
then $\R(T)\cap \R(T^*)=\{0\}$ if and only if for every $x\in \R(T)$ the implication $B^*x=0 \Longrightarrow A^* x=0$ holds. This is equivalent with $\cl{\R(A)}\subseteq \cl{\R(B)}$, which is equivalent with $\cl{\R(A)}+\cl{\R(B)} = \cl{\R(B)}$.

The subspace $\R(T)$ is closed and $\R(T)=\R(A)+\R(B)$, so we have $\cl{\R(A)} + \cl{\R(B)}\subseteq \cl{\R(A)+\R(B)} =\cl{\R(T)}=\R(T)=\R(A) + \R(B)$. Hence $\cl{\R(A)} + \cl{\R(B)} = \cl{\R(B)}$ if and only if $\cl{\R(B)}=\R(T)$ and the statement (1) is proved.
\\
\\
(2) First let us prove that $\R(T)+\R(T^*)=\R(T) \oplus \R(B^*)$. For every $x\in \R(T)$ we have that $B^* x = - A^*x + (A^*x + B^*x) $, where $A^*x\in \R(T)$ and $A^*x + B^*x\in \R(T^*)$. Thus $\R(B^*)\subseteq \R(T)+\R(T^*)$ and so $\R(T)\oplus \R(B^*)\subseteq \R(T)+\R(T^*)$. The other implication is clear, since $\R(T^*)\subseteq \R(T)\oplus \R(B^*)$. Thus $\H=\R(T) + \R(T^*)$ if and only if $\H = \R(T)\oplus \R(B^*)$, and $\R(B^*)\subseteq \N(T^*)$, so this is equivalent with $\R(B^*)=\N(T^*)$.
\\
\\
(3) If $T$ is co-EP then $T$ is DR and SR, so $\cl{\R(B)}=\R(T)$ and $\R(B^*)=\N(T^*)$. Thus $\R(B^*)$, i.e. $\R(B)$ is closed, $\R(B)=\R(T)$ and $\N(B)=\R(B^*)^\bot = \{0\}$, showing that $B$ is invertible.

If $B$ is invertible, from (1) and (2) we conclude that T is in the same time DR and SR, so it is co-EP.
\end{proof}


\begin{lemma}\label{djl1} If $T\in \B(\H)$ is a closed range operator, then $\R(TT^\dag - T^\dag T)$ is closed if and only if $\R(T)+\R(T^*)$ is closed, if and only if $\R(B)$ is closed, where $B$ is as in \eqref{djeq1}.
\end{lemma}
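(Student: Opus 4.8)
The plan is to prove the two biconditionals separately, the first of which is almost a byproduct of the proof of Theorem~\ref{djt2}. There it is shown that $\R(T)+\R(T^*)=\R(T)\oplus\R(B^*)$ with $\R(B^*)\subseteq\N(T^*)$. Since the decomposition $\H=\R(T)\oplus\N(T^*)$ of \eqref{djeq1} is orthogonal (recall $\R(T)$ is closed here), this exhibits $\R(T)+\R(T^*)$ as the orthogonal sum of the closed subspace $\R(T)$ with $\R(B^*)$; such a sum is closed exactly when $\R(B^*)$ is closed, and $\R(B^*)$ is closed if and only if $\R(B)$ is. So $\R(T)+\R(T^*)$ closed $\iff$ $\R(B)$ closed, and it remains to link this to the closedness of $\R(TT^\dag-T^\dag T)$.

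For the second biconditional I would work with the self-adjoint operator $D:=TT^\dag-T^\dag T=P_{\R(T)}-P_{\R(T^*)}$. Using $I-P_{\R(T^*)}=P_{\N(T)}$ and $I-P_{\R(T)}=P_{\N(T^*)}$, write $D=D_1+D_2$ with $D_1:=P_{\R(T)}P_{\N(T)}$ and $D_2:=-P_{\N(T^*)}P_{\R(T^*)}$. The key observation is that $\R(D_1)\subseteq\R(T)$ while $\R(D_2)\subseteq\N(T^*)=\R(T)^\bot$, so these ranges are orthogonal; hence $D_1^*D_2=D_2^*D_1=0$ and $D^*D=D_1^*D_1+D_2^*D_2$. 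Moreover $D_1^*D_1=P_{\N(T)}P_{\R(T)}P_{\N(T)}$ is carried by $\N(T)$ and annihilates $\N(T)^\bot=\R(T^*)$, whereas $D_2^*D_2=P_{\R(T^*)}P_{\N(T^*)}P_{\R(T^*)}$ is carried by $\R(T^*)$ and annihilates $\N(T)$, so $D^*D$ is block diagonal for the orthogonal splitting $\H=\N(T)\oplus\R(T^*)$. Consequently $\R(D^*D)$ is closed if and only if both $\R(D_1^*D_1)$ and $\R(D_2^*D_2)$ are closed; since a bounded $S$ has closed range iff $S^*S$ does, this is the same as asking that $\R(D_1)$ and $\R(D_2)$ both be closed, and $\R(D)$ is closed iff $\R(D^*D)$ is.

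To finish I would invoke the classical criterion that for closed subspaces $\M,\N$ the range $\R(P_\M P_\N)$ is closed if and only if $\M^\bot+\N$ (equivalently $\M+\N^\bot$) is closed. Applied to $D_1$ this gives: $\R(D_1)$ closed $\iff$ $\N(T^*)+\N(T)$ closed $\iff$ $\R(T)+\R(T^*)$ closed (on passing to orthogonal complements); applied to $D_2$ it gives directly $\R(D_2)$ closed $\iff$ $\R(T)+\R(T^*)$ closed. Hence $\R(TT^\dag-T^\dag T)$ is closed if and only if $\R(T)+\R(T^*)$ is closed, which together with the first paragraph proves the lemma.

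The only non-formal ingredient is that criterion for $\R(P_\M P_\N)$ to be closed, and I expect it to be the one point needing care; it is standard --- it amounts to positivity of the Friedrichs angle between $\M^\bot$ and $\N$, in the spirit of the two-projections theory, cf.\ \cite{Bottcher1} --- and, should a self-contained argument be preferred, it drops out of $\|P_\M x\|^2=\|x\|^2-\|P_{\M^\bot}x\|^2$ once one notes that $P_{\M^\bot}$ carries $\N\ominus(\N\cap\M^\bot)$ into $\M^\bot\ominus(\N\cap\M^\bot)$. All the remaining steps are bookkeeping with orthogonal direct sums (closed iff each summand is closed) and the invariance of closedness of a subspace under bounded invertible maps, so no further obstacle is anticipated.
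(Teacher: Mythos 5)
Your proof is correct. Its first half --- identifying $\R(T)+\R(T^*)=\R(T)\oplus\R(B^*)$ with $\R(B^*)\subseteq\N(T^*)=\R(T)^\bot$ and concluding that this orthogonal sum is closed exactly when $\R(B^*)$, i.e.\ $\R(B)$, is closed --- is precisely the paper's argument. Where you genuinely diverge is the equivalence between closedness of $\R(TT^\dag-T^\dag T)$ and of $\R(T)+\R(T^*)$: the paper disposes of it in one line by citing \cite[Lemma 2.4]{Koliha1}, which says exactly that for two orthogonal projections the difference has closed range iff the sum of their ranges is closed, whereas you reprove this fact for the projections at hand. Your route --- splitting $D=P_{\R(T)}P_{\N(T)}-P_{\N(T^*)}P_{\R(T^*)}$, using the orthogonality of the two summands' ranges to get $D^*D=D_1^*D_1+D_2^*D_2$ block-diagonal over $\H=\N(T)\oplus\R(T^*)$, passing through ``$\R(S)$ closed iff $\R(S^*S)$ closed'', and finishing with the classical criterion that $\R(P_{\M}P_{\N})$ is closed iff $\M^\bot+\N$ is closed (plus the companion fact that $\M+\N$ is closed iff $\M^\bot+\N^\bot$ is) --- checks out at every step. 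What it buys is a transparent, nearly self-contained treatment of the key step; what it costs is length, and the admission that the one lemma you lean on (positivity of the Friedrichs angle, cf.\ \cite{Bottcher1}) comes from the same two-projections circle of ideas as the result the paper cites, so in effect you have re-derived the Koliha--Rako\v{c}evi\'c lemma rather than bypassed it. Either version is acceptable; the paper's is simply shorter because it outsources exactly that equivalence.
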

\begin{proof}
Operators $TT^\dag$ and $T^\dag T$ are orthogonal projections, so from \cite[Lemma 2.4]{Koliha1} we have that $\R(TT^\dag - T^\dag T)$ is closed iff $\R(TT^\dag) + \R(T^\dag T)$ is closed, iff $\R(T)+\R(T^*)$ is closed.

As in the proof of statement (2) in Theorem \ref{djt2} we have that $\R(T)+\R(T^*)=\R(T)\oplus \R(B^*)$. Since $\R(T)$ is closed and $\R(B^*)\subseteq (\R(T))^\bot$ we have that $\R(T)\oplus \R(B^*)$ is closed iff $\R(B^*)$ is closed, i.e. iff $\R(B)$ is closed.
\end{proof}

It is clear from Lemma \ref{djl1} that \cite[Theorem 3.5, (i)]{Deng2} follows from Theorem \ref{djt2}, while the other statements of \cite[Theorem 3.5]{Deng2} hold verbatim without additional assumptions.

A natural connection between CoR and DR  operators is described by the following statements.

\begin{lemma}\label{djl2}
Let $T\in \B(\H)$ be a closed range CoR operator. Then $T(\R(T)\cap \R(T^*))=\R(T)\cap \R(T^*)$, $T(\R(T^*)\ominus (\R(T)\cap \R(T^*)))=\R(T)\ominus (\R(T)\cap \R(T^*))$, and consequently, $T((\R(T)\cap \R(T^*))^\bot)\subseteq (\R(T)\cap \R(T^*))^\bot$.
\end{lemma}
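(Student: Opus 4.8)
The plan is to show that, for a closed range CoR operator $T$, the (closed) subspace $\M:=\R(T)\cap\R(T^*)$ completely reduces $T$, and then to analyse the two reductions of $T$, to $\M$ and to $\M^{\bot}$, separately. Since $\R(T)$ is closed we may drop the closures in Definition \ref{djd2}, so the CoR hypothesis says precisely that $Tx=T^*x$ for every $x\in\M$. Hence $T(\M)=T^*(\M)\subseteq\R(T)\cap\R(T^*)=\M$, and the same reasoning gives $T^*(\M)\subseteq\M$; thus $\M$ and $\M^{\bot}$ are both $T$-invariant, which in particular already yields the last assertion $T(\M^{\bot})\subseteq\M^{\bot}$.

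Next I would write $T=T_0\oplus T_1$ with respect to $\H=\M\oplus\M^{\bot}$, with $T_0=T|_{\M}$, $T_1=T|_{\M^{\bot}}$, so that $T^*=T_0^*\oplus T_1^*$ and $\R(T)=\R(T_0)\oplus\R(T_1)$, $\R(T^*)=\R(T_0^*)\oplus\R(T_1^*)$ are orthogonal decompositions. Because $\R(T)$ is closed, each of the mutually orthogonal pieces $\R(T_0)$, $\R(T_1)$ is closed: if $a\in\cl{\R(T_0)}\subseteq\R(T)$ and $a=u+v$ with $u\in\R(T_0)$, $v\in\R(T_1)$, then $v=a-u\in\cl{\R(T_0)}$ while also $v\perp\cl{\R(T_0)}$, forcing $v=0$. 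The decisive point is that $T_0$ is self-adjoint on $\M$: its adjoint equals $P_{\M}T^*|_{\M}=T^*|_{\M}=T|_{\M}=T_0$, using $T^*(\M)\subseteq\M$ together with the CoR identity. Moreover $\N(T_0)=\N(T)\cap\M\subseteq\N(T)\cap\R(T^*)=\{0\}$, so $T_0$ is injective with closed range, and a self-adjoint operator that is injective with closed range is surjective. Therefore $T(\M)=\R(T_0)=\M$, which is the first equality; in particular $\R(T_0^*)=\R(T_0)=\M$.

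For the second equality the decompositions above become $\R(T)=\M\oplus\R(T_1)$ and $\R(T^*)=\M\oplus\R(T_1^*)$, whence $\R(T)\ominus\M=\R(T_1)$ and $\R(T^*)\ominus\M=\R(T_1^*)$. Since $\R(T_1^*)\subseteq\M^{\bot}$ and $T$ acts there as $T_1$, it remains to check that $T_1(\R(T_1^*))=\R(T_1)$; this is the general identity $\R(S)=S(\R(S^*))$ valid for any closed range operator $S$, which holds because the domain splits as $\N(S)\oplus\R(S^*)$ (the range being closed) and $S$ kills $\N(S)$. Applying it to $S=T_1$ (whose range is closed, by the previous paragraph) proves $T(\R(T^*)\ominus\M)=\R(T)\ominus\M$, and also re-confirms $T(\M^{\bot})=\R(T_1)\subseteq\M^{\bot}$.

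The step I expect to be the main obstacle is upgrading the easy inclusions $T(\M)\subseteq\M$, $T(\M^{\bot})\subseteq\M^{\bot}$ to the exact equalities in (1) and (2). This hinges on two facts worth spelling out: that an orthogonal summand of a closed range is again closed (so that $T_0$, $T_1$ have closed range), and that an injective self-adjoint operator with closed range is onto (this is what turns $T(\M)\subseteq\M$ into $T(\M)=\M$). The remaining ingredients — that $\M$ is closed, that $T^*$ reduces along the same decomposition, and the identity $\R(S)=S(\R(S^*))$ — are routine.
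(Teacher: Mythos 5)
Your proof is correct, but it takes a genuinely different route from the paper: the paper disposes of this lemma in one line by citing an external result (Lemma 2.1 of \cite{Djikic2}, applied to the pair $T$, $T^*$), which is stated there for two operators coinciding on the intersection of the closures of the ranges of their adjoints, whereas you give a self-contained argument. Your route first observes that $\M=\R(T)\cap\R(T^*)$ completely reduces $T$ and $T^*$, then exploits that the compression $T_0=T|_{\M}$ is self-adjoint (by the CoR identity) and injective (since $\N(T)\cap\R(T^*)=\{0\}$) with closed range (an orthogonal summand of the closed range $\R(T)$), hence onto $\M$; the second equality then reduces to the standard identity $S(\R(S^*))=\R(S)$ for the closed range operator $T_1=T|_{\M^\bot}$. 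All the steps check out, including the two you flag as delicate: the closedness of $\R(T_0)$ and $\R(T_1)$ as orthogonal summands of $\R(T)$, and surjectivity of an injective self-adjoint operator with closed range. What the paper's citation buys is brevity and a more general two-operator statement; what your argument buys is self-containedness, and it makes explicit the structural picture (that $T$ acts as an invertible, even self-adjoint, operator on $\R(T)\cap\R(T^*)$ and reduces along $\M\oplus\M^\bot$) which the paper only surfaces later, in Theorem \ref{djt3} and Remark \ref{djr1}.
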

\begin{proof}
Follows directly from \cite[Lemma 2.1]{Djikic2}, applied to $T$ and $T^*$.
\end{proof}

\begin{theorem}\label{djt3}
Let $T\in \B(\H)$ be a closed range CoR operator. There exists a Hilbert space $\H_1$, a bounded linear surjection $\pi: \H \to \H_1$ and an operator $T_1\in \B(\H_1)$ such that:
 \begin{itemize}
 \item[(1)] $T_1$ has a closed range and it is DR;
 \item[(2)] For every $x\in \H$, $\pi(T x)=T_1 \pi(x)$, and  $\pi(T^* x) =T_1^* \pi(x)$;
 \item[(3)] $\N(\pi)=\R(T)\cap \R(T^*)$;
 \item[(4)] For every $x\in \H$, $||\pi(x)||=||(I-P_{\R(T)\cap \R(T^*)}) x||$.
 \end{itemize}
If $\pi$ satisfies these conditions, and $\M$ is a subspace of $\H$ such that $\R(T)\cap \R(T^*)\subseteq \M$ then $\M$ is closed in $\H$ if and only if $\pi(\M)$ is closed in $\H_1$.
\end{theorem}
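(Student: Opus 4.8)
The plan is to construct $\H_1$, $\pi$, and $T_1$ explicitly from the data of Theorem~\ref{djt3}, then check properties (1)--(4) and the final equivalence one at a time. Write $\M_0=\R(T)\cap \R(T^*)$. Since $T$ has closed range, both $\R(T)$ and $\R(T^*)$ are closed, hence $\M_0$ is a closed subspace of $\H$. The natural candidates are $\H_1=\H\ominus \M_0=\M_0^\bot$, with $\pi=P_{\M_0^\bot}$ the orthogonal projection of $\H$ onto $\M_0^\bot$ (viewed as a surjection onto $\H_1$), and $T_1=P_{\M_0^\bot}T|_{\M_0^\bot}$, the compression of $T$ to $\M_0^\bot$. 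Property (4) is then immediate from the definition of $\pi$, and (3) says exactly $\N(P_{\M_0^\bot})=\M_0$, which is clear. The key structural input is Lemma~\ref{djl2}: because $T$ is CoR, we have $T(\M_0)=\M_0$, $T(\M_0^\bot)\subseteq \M_0^\bot$, and symmetrically (applying the lemma's conclusion to $T^*$, which is also CoR with the same $\M_0$) $T^*(\M_0^\bot)\subseteq \M_0^\bot$. Thus the decomposition $\H=\M_0\oplus \M_0^\bot$ reduces $T$, and on $\M_0^\bot$ the compression $T_1$ coincides with the honest restriction $T|_{\M_0^\bot}$, whose adjoint is $T^*|_{\M_0^\bot}$. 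This gives (2): for $x\in \H$, write $x=x_0+x_1$ with $x_0\in \M_0$, $x_1\in \M_0^\bot$; then $\pi(Tx)=P_{\M_0^\bot}(Tx_0+Tx_1)=Tx_1=T_1x_1=T_1\pi(x)$ since $Tx_0\in \M_0$ and $Tx_1\in \M_0^\bot$, and likewise for $T^*$.

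For (1), I must show $T_1$ has closed range and is DR. Closedness: since $\H=\M_0\oplus\M_0^\bot$ reduces $T$ and $T$ has closed range, the reduction of $T$ to $\M_0^\bot$ — which is $T_1$ — must have closed range as well (a reduced operator has closed range iff both pieces do; alternatively, $\R(T)=T(\M_0)\oplus T(\M_0^\bot)=\M_0\oplus \R(T_1)$ is closed and $\R(T_1)\subseteq \M_0^\bot$, so $\R(T_1)$ is closed). For the DR property, the point is that $\R(T_1)$ and $\R(T_1^*)$, computed inside $\H_1=\M_0^\bot$, are exactly $\R(T)\ominus\M_0$ and $\R(T^*)\ominus\M_0$ respectively. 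Indeed $\R(T_1)=T_1(\M_0^\bot)=P_{\M_0^\bot}T(\M_0^\bot)=T(\M_0^\bot)$, and by Lemma~\ref{djl2} applied with the roles of $T$ and $T^*$ exchanged, $T^*(\R(T)\ominus \M_0)=\R(T^*)\ominus\M_0$ — reading this off, $T(\M_0^\bot\cap \cl{\R(T^*)}\text{-part})$... more cleanly: Lemma~\ref{djl2} gives $T(\R(T^*)\ominus\M_0)=\R(T)\ominus\M_0$, and since $T$ kills nothing extra on $\M_0^\bot$ relevant here, one gets $\R(T_1)=\R(T)\ominus \M_0$ and, taking adjoints, $\R(T_1^*)=\R(T^*)\ominus\M_0$. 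Then $\R(T_1)\cap\R(T_1^*)=(\R(T)\ominus\M_0)\cap(\R(T^*)\ominus\M_0)$. Any $v$ in this intersection lies in $\R(T)\cap\R(T^*)=\M_0$ and in $\M_0^\bot$, hence $v=0$. So $T_1$ is DR.

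Finally, the equivalence: if $\M_0\subseteq \M$ then $\M=\M_0\oplus(\M\ominus\M_0)$ with $\M\ominus\M_0\subseteq \M_0^\bot=\H_1$, and $\pi(\M)=P_{\M_0^\bot}\M=\M\ominus\M_0$. Since $\M_0$ is closed, $\M=\M_0\oplus(\M\ominus\M_0)$ is an orthogonal direct sum, so $\M$ is closed in $\H$ if and only if $\M\ominus\M_0$ is closed in $\M_0^\bot$, i.e. iff $\pi(\M)$ is closed in $\H_1$; conversely any $\pi$ satisfying (3) and (4) is, up to the obvious isometric identification of $\H_1$ with $\M_0^\bot$, forced to be $P_{\M_0^\bot}$ composed with that identification, so the argument transfers. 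The main obstacle I anticipate is the bookkeeping in (1): pinning down $\R(T_1^*)$ precisely as $\R(T^*)\ominus\M_0$ and confirming that taking the compression does not shrink or enlarge the range — this is exactly where the CoR hypothesis, via Lemma~\ref{djl2}, is essential, since without reducibility the compression $P_{\M_0^\bot}T|_{\M_0^\bot}$ would have an adjoint unrelated to $T^*$ in a usable way.
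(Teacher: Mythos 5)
Your proposal is correct and follows essentially the same route as the paper: take $\H_1=(\R(T)\cap\R(T^*))^\bot$, let $\pi$ be the orthogonal projection onto it, let $T_1$ be the restriction of $T$ (well defined by Lemma \ref{djl2}), and prove the final closedness equivalence via the orthogonal splitting $\M=(\R(T)\cap\R(T^*))\oplus(\M\ominus(\R(T)\cap\R(T^*)))$ and the fact that $\pi$ is isometric on the second summand. The only blemish is the garbled sentence identifying $\R(T_1)$ with $\R(T)\ominus(\R(T)\cap\R(T^*))$, but the identification itself is correct and is exactly what the paper uses, so no substantive gap remains.
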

\begin{proof}
Let $\H_1$ be the orthogonal complement of $\R(T)\cap \R(T^*)$ in $\H$ and $\pi: \H\to \H_1$ defined as $\pi(x)=(I - P_{\R(T)\cap \R(T^*)})x$. In that case $\pi$ is a bounded linear surjection which satisfies (3) and (4).

Usgin Lemma \ref{djl2} it is not difficult to see that the operator $T_1:\H_1 \to \H_1$ defined as $T_1 x = T x$, for every $x\in \H_1$, is a well--defined operator, with a closed range, satisfying all the given conditions. This is easily seen from $T((\R(T)\cap\R(T^*))^\bot)\subseteq (\R(T)\cap\R(T^*))^\bot$, $\R(T_1)=\pi(\R(T))=\R(T)\ominus (\R(T)\cap \R(T^*))$, etc.

To prove the last statement, note that if $\M$ is a closed subspace of $\H$ such that $\R(T)\cap \R(T^*)\subseteq \M$ and $\N=(\R(T)\cap \R(T^*))^\bot \cap \M$, then $\M = (\R(T)\cap \R(T^*))\oplus \N$, $\N\bot \R(T)\cap \R(T^*)$ and according to (4), $\pi$ is an isometry on $\N$. So $\M$ is closed iff $\N$ is closed, iff $\pi(\N)$ is closed, iff $\pi(\M)$ is closed, since according to (3) we have $\pi(\N)=\pi(\M)$.
\end{proof}

\begin{remark}\label{djr1} 
The converse of Theorem \ref{djt3} is not true: if there exist such $\H_1, \pi$ and $T_1$, the operator $T$ need not to be CoR. However, in that case we can conclude that $T(\R(T)\cap \R(T^*))\subseteq \R(T)\cap \R(T^*)$, and similarly for $T^*$, and so the decomposition $\H=(\R(T)\cap \R(T^*))\oplus (\R(T)\cap \R(T^*))^\bot$ completely reduces both $T$ and $T^*$. This further yields that $T$ is an isomorphism on $\R(T)\cap \R(T^*)$ although it is not necessarily self--adjoint.
\end{remark}

 In order to state the characterization of CoR operators similar to that in Theorem \ref{djt2}, let:
 \begin{equation}\label{djeq2}
 P_{\cl{\R(T)}\cap \cl{\R(T^*)}} = \begin{bmatrix} P & 0 \\ 0 & 0 \end{bmatrix}: \begin{bmatrix} \cl{\R(T)}  \\ \N(T^*)\end{bmatrix} \to \begin{bmatrix} \cl{\R(T)}  \\ \N(T^*)\end{bmatrix},
 \end{equation} where $P\in \B(\cl{\R(T)})$ is the orthogonal projection with the range $\cl{\R(T)}\cap \cl{\R(T^*)}$ and the null-space $\cl{\R(T)}\ominus \cl{\R(T)}\cap \cl{\R(T^*)}$. Also, if $\R(T)$ is closed and $A$ and $B$ defined as in \eqref{djeq1}, then $AA^* + BB^*\in \B(\R(T))$ is invertible, and as in \cite{Deng2} we denote $\Delta = (AA^* + BB^*)^{-1}$.

\begin{theorem}\label{djt5} Let $T\in \B(\H)$ be an operator with $A$ and $B$ defined as in \eqref{djeq1}. The operator $T$ is CoR if and only if $A$ and $A^*$ coincide on $\cl{\R(T)}\cap \cl{\R(T^*)}$ and $\cl{\R(T)}\cap \cl{\R(T^*)}\subseteq \N(B^*)$. In that case, we have:
\begin{itemize}
\item[(1)] $PAP=AP=A^*P = PA^*P$. If $\R(T)$ is closed, then also $P\Delta P = \Delta P$;
\item[(2)] $\N(B^*)=\cl{\R(T)}\cap \cl{\R(T^*)}$, i.e. $\cl{\R(B)}=\cl{\R(T)}\ominus (\cl{\R(T)}\cap \cl{\R(T^*)})$.
\end{itemize}
\end{theorem}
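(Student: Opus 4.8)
Throughout this proof write $\M=\cl{\R(T)}\cap\cl{\R(T^*)}$ and use the decomposition $\H=\cl{\R(T)}\oplus\N(T^*)$ underlying \eqref{djeq1}--\eqref{djeq2}. The equivalence itself is essentially a bookkeeping matter: since $\M\subseteq\cl{\R(T)}$, every $m\in\M$ has the form $(u,0)$ with $u\in\R(P)$, and then $Tm=(Au,0)$ while $T^*m=(A^*u,B^*u)$. Thus $T$ and $T^*$ coincide on $\M$ exactly when $Au=A^*u$ and $B^*u=0$ for every such $u$, i.e.\ exactly when $A$ and $A^*$ coincide on $\M$ and $\M\subseteq\N(B^*)$. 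I will use this equivalence freely in the rest of the argument.

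For (1), assume $T$ is CoR. The equality $AP=A^*P$ is immediate from $A|_\M=A^*|_\M$. For the rest, note that for $m\in\M$ we have $Tm=T^*m\in\R(T)\cap\R(T^*)\subseteq\M$; in block form this says $A(\M)\subseteq\M$, hence $PAP=AP$, and applying $P$ on the left to $AP=A^*P$ gives $PA^*P=PAP$, so all four operators agree. If in addition $\R(T)$ is closed, then $\Delta^{-1}=AA^*+BB^*$ is an invertible self-adjoint operator on $\cl{\R(T)}$; for $m\in\M$ one has $B^*m=0$ and $A^*m=Am\in A(\M)\subseteq\M$, so $\Delta^{-1}m=AA^*m=A(Am)\in\M$. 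Hence $\M$ is invariant for $\Delta^{-1}$, therefore (as $\Delta^{-1}$ is invertible and self-adjoint) also for $\Delta$, and $\Delta Px\in\M=\R(P)$ for all $x$, i.e.\ $P\Delta P=\Delta P$.

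For (2) it remains to prove $\N(B^*)\subseteq\M$. First I would check that $A^*(\N(B^*))\subseteq\M$: if $B^*u=0$ and $(x,y)\in\N(T)$, i.e.\ $Ax+By=0$, then $\langle A^*u,x\rangle=\langle u,Ax\rangle=-\langle B^*u,y\rangle=0$, so $(A^*u,0)\in\N(T)^\bot=\cl{\R(T^*)}$, while trivially $(A^*u,0)\in\cl{\R(T)}$; hence $A^*u\in\M$. Now take $u\in\N(B^*)$ and set $w=u-Pu$. Then $w\perp\M$, $B^*w=0$, and $A^*w\in\M$ (because $A^*u\in\M$ and $A^*(\M)=A(\M)\subseteq\M$). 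Put $n=A^*w\in\M$; using $An=A^*n$ we get $\|n\|^2=\langle A^*w,n\rangle=\langle w,An\rangle=0$, since $An\in A(\M)\subseteq\M$ and $w\perp\M$. So $A^*w=0$, and together with $B^*w=0$ this puts $w$ in $\N(A^*)\cap\N(B^*)=(\cl{\R(A)+\R(B)})^\bot$, which is $\{0\}$ inside $\cl{\R(T)}$ because $\R(A)+\R(B)=\R(T)$ is dense there. Hence $u=Pu\in\M$, proving $\N(B^*)=\M$; the equivalent statement about $\cl{\R(B)}$ then follows by taking orthogonal complements in $\cl{\R(T)}$.

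The genuinely routine part is the block-matrix translation in the first paragraph. The step I expect to be the real obstacle is the inclusion $\N(B^*)\subseteq\M$ in (2): here there is no closed-range hypothesis, so Lemma \ref{djl2} is unavailable and one has to produce the conclusion by hand. The crucial device is the norm identity $\|n\|^2=\langle w,An\rangle=0$, which forces $A^*w=0$ and thereby reduces matters to the injectivity of $x\mapsto(A^*x,B^*x)$ on $\cl{\R(T)}$, equivalently the density of $\R(A)+\R(B)$ in $\cl{\R(T)}$.
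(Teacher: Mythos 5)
Your proof is correct and follows essentially the same route as the paper: the block computation of $(T-T^*)$ on $\M$ for the equivalence, the invariance $A(\M)\subseteq\M$ (hence of $\M$ and its complement under $AA^*+BB^*$, giving $P\Delta P=\Delta P$), and for (2) the same orthogonality trick — your identity $\|A^*w\|^2=\langle w,AA^*w\rangle=0$ is exactly the paper's $\|T^*x\|^2=\langle x,TT^*x\rangle=0$ written at the block level (your detour through $\N(T)^\perp$ to get $A^*u\in\M$ is harmless but redundant, since $(A^*u,0)=T^*(u,0)\in\R(T^*)$ automatically).
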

\begin{proof}
The operator $T$ is CoR if and only if $(T-T^*)P_{\cl{\R(T)}\cap \cl{\R(T^*)}}=0$, i.e. $$\begin{bmatrix} (A-A^*)P & 0 \\ -B^* P & 0 \end{bmatrix} = 0.$$ From here the first statement of the theorem follows directly.

Suppose now that $T$ is a CoR operator.

\noindent (1) We already have $AP=A^*P$, and $\R(P)\subseteq \N(B^*)$. If $x\in \R(P)=\cl{\R(T)}\cap \cl{\R(T^*)}$ is arbitrary, then: $$T^* x = \begin{bmatrix} A^* & 0 \\ B^* & 0 \end{bmatrix} \begin{bmatrix} x \\ 0 \end{bmatrix} = \begin{bmatrix} A^*x \\ 0 \end{bmatrix} \in \cl{\R(T)}.$$ Since $T^* x\in \cl{\R(T)}\cap \cl{\R(T^*)}$, we have that $A^*x\in \cl{\R(T)}\cap \cl{\R(T^*)}$. This proves $PA^*P=A^*P$, but $A^*P=AP$, so $PAP=AP$ also. From here we also obtain $(I-P)A(I-P)=A(I-P)$, and $(I-P)A^*(I-P)=A^*(I-P)$, so $\cl{\R(T)}\ominus (\cl{\R(T)}\cap \cl{\R(T^*)})$ is also invariant for $A$ and $A^*$. The equality $B^* P =0$ implies $\R(B)\subseteq \cl{\R(T)}\ominus (\cl{\R(T)}\cap \cl{\R(T^*)})$. Finally, if $\R(T)$ is closed, we see that the subspaces $\R(T)\cap \R(T^*)$ and $\R(T)\ominus (\R(T)\cap \R(T^*))$ are invariant also for $AA^* + BB^*$ which is an isomorphism. Therefore $P\Delta P = \Delta P$.


\noindent (2) If $x\in \cl{\R(T)}\ominus (\cl{\R(T)}\cap \cl{\R(T^*)})$ is such that $B^*x =0$, then $T^*x\in \cl{\R(T)}$, i.e. $T^*x\in \cl{\R(T)}\cap \cl{\R(T^*)}$. Therefore, $TT^* x\in \cl{\R(T)}\cap \cl{\R(T^*)}$, and so $0=\langle x, TT^*x\rangle = ||T^*x||^2$, giving $x=0$. Thus $\N(B^*)=\cl{\R(T)}\cap \cl{\R(T^*)}$.
\end{proof}

In order to give a formula for $(T+T^*)^\dag$ when $T$ is CoR, we first prove the following result regarding range additivity, explaining when does $(T+T^*)^\dag$ exist (see also \cite[Theorem 2.4]{Djikic2}).

\begin{theorem}\label{djt4} Let $T\in \B(\H)$ be a closed range CoR operator. Then $\R(TT^\dag - TT^\dag)$ is closed if and only if $\R(T) + \R(T^*)$ is closed if and only if $\R(T+T^*)$ is closed. In that case $\R(T) + \R(T^*)=\R(T+T^*)$.
\end{theorem}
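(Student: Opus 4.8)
Reading the operator in the theorem as $TT^\dag-T^\dag T$ (the printed $TT^\dag-TT^\dag$ being a misprint), the first of the three equivalences is precisely Lemma~\ref{djl1}, so what really has to be proved is that $\R(T)+\R(T^*)$ is closed if and only if $\R(T+T^*)$ is, together with the identity in that case. The plan is to work throughout with the refined block form of a closed range CoR operator supplied by Theorem~\ref{djt5}. Write $\M=\R(T)\cap\R(T^*)$, which is closed since $\R(T)$, hence $\R(T^*)$, is, and $\L=\R(T)\ominus\M$, which by Theorem~\ref{djt5}(2) equals $\cl{\R(B)}$. By Theorem~\ref{djt5}(1) the projection $P$ onto $\M$ commutes with $A$, so $A=A_1\oplus A_2$ relative to $\R(T)=\M\oplus\L$, with $A_1=A_1^*$; since $A_1$ coincides with $T|_\M$, which by Lemma~\ref{djl2} maps $\M$ onto $\M$, and $A_1$ is self-adjoint, $A_1$ is an isomorphism of $\M$. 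Moreover $\R(B)\subseteq\L$ with $\cl{\R(B)}=\L$, and by Theorem~\ref{djt5}(2) the operator $B^*$ is injective on $\L$. Denoting by $B_2$ the operator $B$ viewed as a map $\N(T^*)\to\L$, with respect to $\H=\M\oplus\L\oplus\N(T^*)$ we obtain
$$T+T^*=2A_1\ \oplus\ N,\qquad N=\begin{bmatrix}A_2+A_2^* & B_2\\ B_2^* & 0\end{bmatrix}:\ \L\oplus\N(T^*)\to\L\oplus\N(T^*).$$

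Since $2A_1$ is an isomorphism of $\M$, we have $\R(T+T^*)=\M\oplus\R(N)$, so $\R(T+T^*)$ is closed precisely when $\R(N)$ is. The key point is that $N$ is self-adjoint with an easily computed kernel: from $N(\ell,n)=0$ one gets $B_2^*\ell=0$, hence $\ell=0$ by injectivity of $B_2^*$, and then $B_2n=0$; thus $\N(N)=\{0\}\oplus\N(B_2)$ and $\cl{\R(N)}=\N(N)^\bot=\L\oplus\cl{\R(B_2^*)}$. From this the two directions fall out. If $\R(B)$ is closed then, $B_2$ having dense range in $\L$, actually $\R(B_2)=\L$; hence for each $\ell$ the first coordinate $(A_2+A_2^*)\ell+B_2n$ sweeps all of $\L$ as $n$ varies, so $\R(N)=\L\oplus\R(B_2^*)$ is closed. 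Conversely, if $\R(N)$ is closed then $\R(N)=\L\oplus\cl{\R(B_2^*)}$, so $(\ell,0)\in\R(N)$ for every $\ell\in\L$; writing $(\ell,0)=N(\ell',n')$ and using injectivity of $B_2^*$ again forces $\ell'=0$ and then $\ell=B_2n'\in\R(B_2)$, so $\R(B_2)=\L$ is closed.

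Combining this with Lemma~\ref{djl1} yields the chain
$$\R(T+T^*)\text{ closed}\iff\R(N)\text{ closed}\iff\R(B)\text{ closed}\iff\R(T)+\R(T^*)\text{ closed}\iff\R(TT^\dag-T^\dag T)\text{ closed}.$$
Finally, in the closed case $\R(T+T^*)=\M\oplus\R(N)=\M\oplus\L\oplus\R(B_2^*)=\R(T)\oplus\R(B^*)$, and this last subspace equals $\R(T)+\R(T^*)$ by the computation already carried out in the proof of Theorem~\ref{djt2}(2); this gives the asserted identity.

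The main obstacle is the implication ``$\R(T+T^*)$ closed $\Rightarrow$ $\R(T)+\R(T^*)$ closed'': the inclusion $\R(T+T^*)\subseteq\R(T)+\R(T^*)$ is automatic and by itself useless, and it is exactly the self-adjointness of $T+T^*$ (which lets one identify $\cl{\R(T+T^*)}$ with $\N(T+T^*)^\bot$ and evaluate it) together with the injectivity of $B^*$ on $\L$ coming from Theorem~\ref{djt5}(2) that forces a closed $\R(T+T^*)$ to push $\R(B)$ back to being closed. A smaller point requiring care is the claim that $A_1$ is an isomorphism of $\M$, which rests on $T(\R(T)\cap\R(T^*))=\R(T)\cap\R(T^*)$ from Lemma~\ref{djl2} and on $A_1=A_1^*$ from Theorem~\ref{djt5}(1).
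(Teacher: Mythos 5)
Your proof is correct, but it takes a genuinely different route from the paper. The paper reduces the CoR case to the DR case: it compresses $T$ to $(\R(T)\cap\R(T^*))^\bot$ via the map $\pi$ of Theorem~\ref{djt3}, obtaining a closed range DR operator $T_1$, invokes the range-additivity result $\R(T_1+T_1^*)=\R(T_1)\oplus\R(T_1^*)$ of \cite[Theorem 3.10]{Arias2}, and transfers closedness back through $\pi$ using the last part of Theorem~\ref{djt3} and Lemma~\ref{djl2}; for the converse direction it simply cites \cite[Corollary 2.1]{Djikic2}, which yields $\R(T+T^*)=\R(T)+\R(T^*)$ (hence closedness of the sum) whenever $\R(T+T^*)$ is closed. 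You instead work entirely inside the refined decomposition $\H=\M\oplus\L\oplus\N(T^*)$ furnished by Theorem~\ref{djt5}: the commutation $PA=AP$ (obtained from $PAP=AP=A^*P=PA^*P$ by taking adjoints), the surjectivity of $T|_\M$ from Lemma~\ref{djl2}, and the injectivity of $B^*$ on $\L$ let you write $T+T^*=2A_1\oplus N$ with $A_1$ an isomorphism of $\M$, and the self-adjointness of $N$ plus the computation $\N(N)=\{0\}\oplus\N(B)$ give both implications between closedness of $\R(N)$ and closedness of $\R(B)$, after which Lemma~\ref{djl1} and the computation $\R(T)+\R(T^*)=\R(T)\oplus\R(B^*)$ from Theorem~\ref{djt2}(2) close the circle and yield the identity. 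The trade-off: the paper's argument is shorter but leans on two external results and on the DR-reduction philosophy of Theorem~\ref{djt3}, whereas yours is self-contained within the paper's own lemmas, handles the delicate implication ``$\R(T+T^*)$ closed $\Rightarrow\R(B)$ closed'' by an explicit kernel/range computation rather than by citation, and as a by-product exhibits $\R(T+T^*)=\R(T)\oplus\R(B^*)$ explicitly, which is exactly the description used again in the proof of Theorem~\ref{djt6}. (Your reading of the misprint $TT^\dag-TT^\dag$ as $TT^\dag-T^\dag T$ is of course the intended one.)
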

\begin{proof}
The first equivalence follows from Lemma \ref{djl1}, so we prove the second equivalence.

Suppose first that $\R(T) + \R(T^*)$ is closed. Let $\H_1, \pi$ and $T_1$ be defined as in the proof of Theorem \ref{djt3}. Then $T_1$ is a closed range DR operator. Note that $\R(T_1)\oplus \R(T_1^*)=\pi(\R(T) + \R(T^*))$, and $\R(T)\cap \R(T^*)\subseteq \R(T) + \R(T^*)$, so using Theorem \ref{djt3} we have that $\R(T_1)\oplus \R(T_1^*)$ is also closed.  According to \cite[Theorem 3.10]{Arias2}, we have that $\R(T_1 + T_1^*)=\R(T_1)\oplus \R(T_1^*)$, so $\R(T_1 + T_1^*)$ is also closed. We can easily prove that $\R(T_1 + T_1^*)=\pi (\R(T + T^*))$, and since $T$ is CoR, $\R(T)\cap \R(T^*) = T(\R(T)\cap \R(T^*)) \subseteq \R(T+T^*)$ (Lemma \ref{djl2}). Thus, again from Theorem \ref{djt3} we get that $\R(T+T^*)$ is also closed.

Suppose now that $\R(T+T^*)$ is closed. From \cite[Corollary 2.1]{Djikic2} we have that $\R(T+T^*)=\R(T) + \R(T^*)$, so $\R(T)+\R(T^*)$ is also closed. This also proves the second statement of the theorem.
\end{proof}

Thus the range additivity $\R(T+T^*)=\R(T) + \R(T^*)$ which appears in \cite[Theorem 3.9 (ii)]{Deng2} is also present in the case when operators are DR and not necessarily SR. For matrices, this was noted in \cite[p. 1229]{Baks1}, but the technique used therein relies on notions which are not accessible in infinite-dimensional Hilbert spaces.

\begin{theorem}\label{djt6} If $T$ is a closed range CoR operator and if any of the (equivalent) conditions is satisfied: $\R(B)$ is closed, $\R(T+T^*)$ is closed, $\R(T)+\R(T^*)$ is closed, or $\R(TT^\dag - T^\dag T)$ is closed, then: \begin{equation}\label{djeq3}(T+T^*)^\dag = \begin{bmatrix} \frac{1}{2} A^* \Delta P & (B^*)^\dag \\ B^\dag & -B^\dag (A+A^*)(B^*)^\dag  \end{bmatrix},\end{equation}
where operators $A,B,\Delta$ and $P$ are defined as in the previous discussion.
\end{theorem}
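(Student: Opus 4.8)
The plan is to verify the Moore--Penrose equations directly, exploiting the block structure. First I would set $S = T+T^*$, which in the decomposition \eqref{djeq1} has the block form
$$S = \begin{bmatrix} A+A^* & B \\ B^* & 0 \end{bmatrix}: \begin{bmatrix} \cl{\R(T)} \\ \N(T^*) \end{bmatrix} \to \begin{bmatrix} \cl{\R(T)} \\ \N(T^*) \end{bmatrix}.$$
Since $T$ is CoR, Theorem \ref{djt5} gives $B^*P = 0$, $AP = A^*P = PAP$, and (using closedness of $\R(B)$, which gives $\N(B^*) = \R(T)\cap\R(T^*) = \R(P)$ and $\cl{\R(B)} = \R(B) = \R(T)\ominus(\R(T)\cap\R(T^*)) = \R(I-P)$) the operator $B$ restricts to an isomorphism between $\R(I-P)$ inside $\N(T^*)$... more precisely $B : \N(T^*)\to \R(I-P)$ is surjective with $\R(B^*) = \N(T^*)$ essentially; in any case $B^\dag B$ and $BB^\dag$ are the relevant orthogonal projections, $BB^\dag = I-P$ on $\cl{\R(T)}$ and $B^\dag B = P_{\R(B^*)}$ on $\N(T^*)$, and $B^\dag = (B^*)^{\dag*}$. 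I would also record $PA^* = PAP$, $(I-P)A(I-P) = A(I-P)$, $P\Delta P = \Delta P$, and $\Delta(AA^*+BB^*) = I$ on $\R(T)$.

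Next I would denote the claimed matrix in \eqref{djeq3} by $X$ and compute the four products $SX$, $XS$, $SXS$, $XSX$ block by block, repeatedly substituting $B^*P = 0$, $AP = A^*P$, $BB^\dag = I-P$, $B^\dag B = P_{\R(B^*)}$ (noting $B$ annihilates $\N(B^*)^{?}$ --- rather $B$ has trivial action issues only through $B^\dag B$), and the identity $\tfrac12 A^*\Delta P \cdot (A+A^*) + \ldots$. The key algebraic cancellations are: in $SX$ the $(1,1)$ entry $(A+A^*)\cdot\tfrac12 A^*\Delta P + B B^\dag = \tfrac12(A+A^*)A^*\Delta P + (I-P)$, and here one uses $(A+A^*)A^* \Delta P = (A+A^*)A^*P\Delta P$... one must show this combination equals $I-P+P = I$ modulo the projection structure, i.e. $SX = $ orthogonal projection onto $\R(S)$. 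A cleaner route: show $SX = P_{\R(S)}$ and $XS = P_{\R(S^*)} = P_{\R(S)}$ (as $S$ is self-adjoint) by identifying $\R(S) = \R(T)+\R(T^*) = \R(T)\oplus\R(B^*)$ from the proof of Theorem \ref{djt2}(2), whose closure is assumed; the orthogonal projection onto this space, written in the $\cl{\R(T)}\oplus\N(T^*)$ frame, is a computable $2\times2$ block matrix, and I would check $SX$ equals it.

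The main obstacle I expect is the $(1,1)$-block bookkeeping involving $\Delta$: one must show $\tfrac12(A+A^*)A^*\Delta P$ has exactly the right action on $\R(P)$ and on $\R(I-P)$. On $\R(I-P)$, since $A^*(I-P) = A^*(I-P)$ lands appropriately and $\Delta$ commutes with $P$, one should get the $\R(T)\ominus(\R(T)\cap\R(T^*))$ part handled by $BB^\dag = I-P$ alone, forcing $(A+A^*)A^*\Delta$ to contribute nothing off $\R(P)$ --- this requires $A(I-P), A^*(I-P)$ and $\Delta(I-P)$ to interact so that $(A+A^*)A^*\Delta P$ is supported on $\R(P)$, which follows from $P\Delta P = \Delta P$ together with $PA^*P = A^*P$ and $PAP = AP$. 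On $\R(P)$, using $AP = A^*P$ and $A^*\Delta P = A^*P\Delta P = (PA^*P)\Delta P$ we get $(A+A^*)A^*\Delta P|_{\R(P)} = 2A P \cdot P\Delta P \cdot (\ldots)$, and invoking $\Delta(AA^*+BB^*) = I$ restricted to $\R(P)$ --- where $BB^* = BB^\dag(BB^*) $ vanishes against $P$ since $BB^\dag = I-P$ --- yields $\Delta AA^* = I$ on $\R(P)$, so $A^*\Delta P \cdot$ (something) collapses and $\tfrac12(A+A^*)A^*\Delta P = \tfrac12\cdot 2\cdot P = P$ there. Combining the two pieces gives the $(1,1)$ entry of $SX$ equal to $P + (I-P) = I$ on $\cl{\R(T)}$. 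The remaining three entries of $SX$ and all of $XS$, $SXS$, $XSX$ are then routine substitutions of $B^*P=0$, $BB^\dag=I-P$, $B^\dag B = P_{\R(B^*)}$, $B^\dag=(B^*)^{\dag*}$, and self-adjointness of the two visible projections; I would also confirm $(SX)^* = SX$ and $(XS)^* = XS$ directly, which is immediate once each equals a (self-adjoint) projection.
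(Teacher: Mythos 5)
Your proposal is correct and follows essentially the same route as the paper's proof: write $T+T^*$ in the block form \eqref{djeq1}, use the identities from Theorem \ref{djt5} ($B^*P=0$, $AP=A^*P=PAP$, $\Delta P=P\Delta P$, $BB^\dag=I-P$) together with $(AA^*+BB^*)\Delta=I$ to show that $(T+T^*)X$ is the block-diagonal operator $\mathrm{diag}(I,P_{\R(B^*)})$, i.e.\ the orthogonal projection onto $\R(T+T^*)=\R(T)\oplus\R(B^*)$ (Theorem \ref{djt4} and Theorem \ref{djt2}(2)), and then exploit self-adjointness of $X$ to finish the Moore--Penrose verification. The only stray remark is the parenthetical suggestion that $\R(B^*)$ is essentially all of $\N(T^*)$ (that would be the SR case and is false in general), but you never use it --- what your computation actually needs, $B^\dag B=P_{\R(B^*)}$ and $B^*(B^*)^\dag=P_{\R(B^*)}$, is exactly what appears in the blocks.
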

\begin{proof}
Denote by $X$ the operator on the right in \eqref{djeq3}. By direct multiplication, we obtain: $$(T+T^*)X=\begin{bmatrix} \frac{1}{2}(A+A^*)A^*\Delta P + BB^\dag & (A+A^*)(B^*)^\dag - BB^\dag (A+A^*)(B^*)^\dag \\ \frac{1}{2} B^*A^*\Delta P & (B^*)(B^*)^\dag
\end{bmatrix}.$$
From Theorem \ref{djt5} we have $B^* P = 0$, $BB^\dag = I-P$, $P(B^*)^\dag = 0$, $P(A+A^*)=(A+A^*)P=2AP$, $AA^*P = (AA^* + BB^*)P$, $\Delta P = P\Delta P$. Hence:
\begin{eqnarray*}
 \frac{1}{2}(A+A^*)A^*\Delta P + BB^\dag & = &  \frac{1}{2}(A+A^*)PA^*P\Delta P + I-P \\ & = & AA^* P \Delta P + I- P \\ & = & (AA^* + BB^*)P\Delta P + I-P \\ & = & I,
\end{eqnarray*}
also $(A+A^*)(B^*)^\dag - BB^\dag (A+A^*)(B^*)^\dag = (A+A^*)(B^*)^\dag - (I-P)(A+A^*)(B^*)^\dag =0$, and $\frac{1}{2} B^*A^*\Delta P = \frac{1}{2} B^*PA^*P\Delta P = 0$. So we conclude: $$(T+T^*)X = \begin{bmatrix} I & 0 \\ 0 & P_{\R(B^*)} \end{bmatrix} : \begin{bmatrix} \R(T)  \\ \N(T^*)\end{bmatrix} \to \begin{bmatrix} \R(T)  \\ \N(T^*)\end{bmatrix}.$$ From Theorem \ref{djt4} and the proof of statement (2) in Theorem \ref{djt2} we have that $\R(T+T^*)=\R(T) + \R(T^*)=\R(T)\oplus \R(B^*)$. So $(T+T^*)X$ is the orthogonal projection onto $\R(T+T^*)$. It is also true that $X$ is self--adjoint. To see this, note that $\Delta$ is self--adjoint and that $A^*P=PA^*=PA^*P$ commutes with $(AA^* + BB^*)P=P(AA^* + BB^*)=P(AA^*+BB^*)P$, and so it commutes with $\Delta P$. Thus $A^*\Delta P = \Delta A^* P =\Delta A P =P\Delta A$. Hence, $X(T+T^*)$ is also the orthogonal projection onto $\R(T+T^*)$. This proves $X=(T+T^*)^\dag$.
\end{proof}

Formula \eqref{djeq3} generalizes the result from \cite[Theorem 3.9]{Deng2} regarding the formula for $(T+T^*)^\dag$, and we have $T(T+T^*)^\dag T = T - \frac{1}{2}P_{\R(T)\cap \R(T^*)}TP_{\R(T)\cap \R(T^*)}$, while $2T(T+T^*)^\dag T^* = 2T^*(T+T^*)^\dag T = P_{\R(T)\cap \R(T^*)}TP_{\R(T)\cap \R(T^*)}$. In fact, the last expression gives the parallel sum of $T$ and $T^*$ as defined in \cite{Corach2}, and in the same time the infimum of $T$ and $T^*$ with respect to the star partial order on $\B(\H)$ (see \cite{Djikic2}).

There are few results from \cite{Baks1} for DR matrices that can be easily proved for CoR operators in the Hilbert space setting. For example, \cite[Theorem 4, Theorem 5]{Baks1} are also true for CoR operators, and \cite[Theorem 8]{Baks1} can be extended using \cite[Theorem 2.4]{Djikic2}. However, we can not have elegant characterizations as the one in \cite[Theorem 1]{Baks1}, since the CoR class is not defined only by mutual positioning of the ranges of appropriate operators. When we make a transition from operators $T$ and $T^*$ to the orthogonal projections $P=P_{\cl{\R(T)}}$ and $Q=P_{\cl{\R(T^*)}}$, we lose the information of the way $T$ and $T^*$ act on these subspaces which determines whether $T$ is CoR.

\section{Products of orthogonal projections}

If $\mathfrak{A}$ and $\mathfrak{B}$  are two classes of operators from $\B(\H)$ there is a natural problem of characterizing operators which belong to the class $\mathfrak{A}\cdot \mathfrak{B} = \{A\cdot B\ |\ A\in \mathfrak{A},\ B\in \mathfrak{B}\}$, or to the class $\mathfrak{A}^\infty = \bigcup_k \mathfrak{A}^k$, where $\mathfrak{A}^k$ stands for $\mathfrak{A}\cdot \mathfrak{A}\cdot ... \cdot \mathfrak{A}$. Such problems are commonly known as factorization problems, and the reader is referred to \cite{Wu, Oikhberg, Corach3, Arias} for some prominent results and further reference on this subject.

Let $\mf{P}$ denote the class of all orthogonal projections from $\B(\H)$. We have the following results regarding the factors from $\mf{P}$.

\begin{theorem}\label{djt1}
Let $P,Q\in \mf{P}$ and $T=P_1P_2...P_k$ such that $P_1,P_2,...,P_k\in \{P,Q\}$. Then $T$ is a CoR operator.
\end{theorem}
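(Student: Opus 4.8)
The plan is to use only the elementary range inclusions $\R(T)\subseteq\R(P_1)$ and $\R(T^*)\subseteq\R(P_k)$ together with the observation that any vector lying in both $\R(P)$ and $\R(Q)$ is fixed by every finite product of $P$'s and $Q$'s; this makes Halmos' two projections theorem unnecessary here. So I would start by writing $T^*=P_kP_{k-1}\cdots P_1$, setting $M=\cl{\R(T)}\cap\cl{\R(T^*)}$, and noting that by Definition \ref{djd2} it is enough to show $Tx=T^*x$ for all $x\in M$. From $T=P_1(P_2\cdots P_k)$ we get $\R(T)\subseteq\R(P_1)$, and from $T^*=P_k(P_{k-1}\cdots P_1)$ we get $\R(T^*)\subseteq\R(P_k)$; since $\R(P_1)$ and $\R(P_k)$ are closed, this yields $M\subseteq\R(P_1)\cap\R(P_k)$.

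Next I would split into two cases. If $P_1\neq P_k$, then $\{P_1,P_k\}=\{P,Q\}$, hence $M\subseteq\R(P)\cap\R(Q)$; for $x$ in this intersection every factor $P_i$ satisfies $P_ix=x$, so evaluating $Tx=P_1\cdots P_kx$ from the right gives $Tx=x$, and likewise $T^*x=P_k\cdots P_1x=x$, so $Tx=T^*x$. If $P_1=P_k$, I would collapse consecutive repeated factors in the word $P_1P_2\cdots P_k$: each collapse is harmless since $P_i^2=P_i$ and changes neither the first nor the last letter, so the reduced word alternates between $P$ and $Q$ and starts and ends with $P_1$; such a word has odd length and is a palindrome. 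Hence $T$ equals the product along the reduced word while $T^*$ equals the product along its reversal, which is the same word, so $T=T^*$ as operators and in particular on $M$. Either way $T$ is a CoR operator.

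The argument is short and I do not expect a serious obstacle, but the delicate point is the case $P_1=P_k$: one must realise that suppressing adjacent equal projections does not change the operator and that an alternating two-symbol word with coinciding first and last letters is fixed by reversal, and only these two facts together force $T=T^*$. (In the complementary case $P_1\neq P_k$ the operator need not be self-adjoint — it may even be properly a DR operator — yet $T$ and $T^*$ still agree on $M$ precisely because $M$ consists of common fixed vectors of $P$ and $Q$.)
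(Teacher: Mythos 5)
Your proof is correct and follows essentially the same route as the paper: reduce the word using idempotency, observe that when the first and last factors coincide the alternating word is a palindrome so $T=T^*$, and when they differ use $\cl{\R(T)}\cap\cl{\R(T^*)}\subseteq\R(P)\cap\R(Q)$, on which both $T$ and $T^*$ act as the identity. The only cosmetic difference is that in the mixed-endpoint case you skip the word reduction, which the paper performs up front in both cases.
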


\begin{proof}
Since $P$ and $Q$ are idempotents, we can exchange multiple consecutive appearances of $P$, i.e. $Q$, with only one $P$, i.e. $Q$. Thus we can suppose that $T=PQPQ...S$ or $T=QPQP...S$ where $S$ is equal to $P$ or $Q$.

Suppose that $T=PQPQ...S$. If $S=P$, then $T=T^*$ and the assertion follows. If $S=Q$, then $T^*=QPQP...P$, so $\cl{\R(T)}\subseteq \cl{\R(P)}=\R(P)$, $\cl{\R(T^*)}\subseteq \R(Q)$ and $\cl{\R(T)}\cap \cl{\R(T^*)}\subseteq \R(P)\cap \R(Q)$. On the other hand, it is clear that $\R(P)\cap \R(Q)\subseteq \R(T)\cap \R(T^*)\subseteq \cl{\R(T)}\cap \cl{\R(T^*)}$. Thus $\cl{\R(T)}\cap \cl{\R(T^*)}=\R(P)\cap \R(Q)$ and both $T$ and $T^*$ are equal to identity on $\cl{\R(T)}\cap \cl{\R(T^*)}$. The case when $P=QPQP...S$ is, of course, the same.
\end{proof}

\begin{corollary}\label{djc1} The class $\mf{P}^2$ belongs to the class of CoR operators.
\end{corollary}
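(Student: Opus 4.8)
The plan is to deduce Corollary~\ref{djc1} as an immediate consequence of Theorem~\ref{djt1}. Recall that $\mf{P}^2$ denotes the set of all operators of the form $P_1P_2$ with $P_1,P_2\in\mf{P}$, i.e. products of exactly two orthogonal projections. But this is precisely the special case $k=2$ of Theorem~\ref{djt1}: given any $R\in\mf{P}^2$, write $R=P_1P_2$ with $P_1,P_2$ orthogonal projections, set $P:=P_1$ and $Q:=P_2$, and apply Theorem~\ref{djt1} with $k=2$, $P_1\in\{P,Q\}$, $P_2\in\{P,Q\}$. The theorem then yields that $R=P_1P_2$ is a CoR operator.

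More concretely, I would simply observe the following. Let $R\in\mf{P}^2$, say $R=PQ$ with $P,Q\in\mf{P}$. If $P=Q$ then $R=P$ is self-adjoint, hence trivially CoR. If $P\neq Q$, Theorem~\ref{djt1} (taking $k=2$ and the product $PQ$) shows directly that $R$ is CoR; one does not even need the idempotent-collapsing reduction from that proof, since the product already has the alternating form with $S=Q$. In that case $\cl{\R(R)}\cap\cl{\R(R^*)}=\R(P)\cap\R(Q)$ and both $R$ and $R^*$ restrict to the identity there, so they coincide. Hence every element of $\mf{P}^2$ is CoR, which is the assertion.

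There is essentially no obstacle here: the corollary is a verbatim instance of the preceding theorem, and the only thing to check is that "product of exactly two orthogonal projections" is the $k=2$ case of "product of $k$ operators each chosen from $\{P,Q\}$," which it is (allowing $P=Q$ covers the degenerate cases $P^2$, $Q^2$). The one-line proof is therefore: \emph{Apply Theorem~\ref{djt1} with $k=2$.}

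Accordingly, the proof I would write is:
\begin{proof}
Let $R\in\mf{P}^2$, so $R=P_1P_2$ for some orthogonal projections $P_1,P_2\in\mf{P}$. Putting $P:=P_1$ and $Q:=P_2$, the operator $R=P_1P_2$ is a product of $k=2$ factors, each belonging to $\{P,Q\}$. By Theorem~\ref{djt1}, $R$ is a CoR operator. Since $R\in\mf{P}^2$ was arbitrary, the whole class $\mf{P}^2$ consists of CoR operators.
\end{proof}
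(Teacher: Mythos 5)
Your proof is correct and matches the paper's: the corollary is obtained by applying Theorem~\ref{djt1} with $k=2$, which is exactly what the paper does (its proof reads ``Follows directly from Theorem~\ref{djt1}''). Your extra remark that allowing $P=Q$ covers the degenerate cases $P^2$ and $Q^2$ is a harmless clarification, not a different route.
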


\begin{proof}
Follows directly from Theorem \ref{djt1}.
\end{proof}

\begin{corollary}\label{djc3} If $P$ and $Q$ are orthogonal projections such that $\R(PQ\cdots PQ)$ is closed, then $\R(PQ\cdots PQ + QP\cdots QP)$ is closed if and only if $\R(PQ\cdots PQ) + \R(QP\cdots QP)$ is closed. In that case $\R(PQ\cdots PQ)+\R(QP\cdots QP)=\R(PQ\cdots PQ + QP\cdots QP)$. (Here $PQ...PQ$ and $QP...QP$ have the same length.)
\end{corollary}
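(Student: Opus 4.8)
The plan is to derive Corollary \ref{djc3} as a direct specialization of Theorem \ref{djt4}. First I would set $T = PQ\cdots PQ$, a product of orthogonal projections of even length. By Theorem \ref{djt1}, $T$ is a CoR operator, and its adjoint is precisely $T^* = QP\cdots QP$ (the reversed product of the same length, since $(PQ\cdots PQ)^* = Q^*P^*\cdots Q^*P^* = QP\cdots QP$). The hypothesis of the corollary is exactly that $\R(T)$ is closed, so $T$ is a closed range CoR operator and Theorem \ref{djt4} applies verbatim.

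Next I would invoke Theorem \ref{djt4} to conclude that $\R(T+T^*)$ is closed if and only if $\R(T)+\R(T^*)$ is closed (the chain of equivalences in that theorem also includes closedness of $\R(TT^\dag - T^\dag T)$ and of $\R(B)$, but those are not needed for the statement), and that in that case $\R(T)+\R(T^*) = \R(T+T^*)$. Substituting back $T = PQ\cdots PQ$ and $T^* = QP\cdots QP$ gives precisely the assertion of the corollary. The only point needing a word of care is that $T+T^* = PQ\cdots PQ + QP\cdots QP$ matches the operator in the statement, which is immediate once one notes the two products have equal length; I would state this explicitly so the reduction is transparent.

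I do not expect a genuine obstacle here: the corollary is a translation of Theorem \ref{djt4} into the language of projection products, and the only substantive input beyond that theorem is Theorem \ref{djt1}, which supplies the CoR property, together with the elementary observation $(PQ\cdots PQ)^* = QP\cdots QP$. If anything merits emphasis in the write-up, it is simply citing Theorem \ref{djt1} for the CoR property and Theorem \ref{djt4} for the equivalence and the range identity, so that the proof is a two-line deduction.

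\begin{proof}
Let $T = PQ\cdots PQ$. By Theorem \ref{djt1}, $T$ is a CoR operator, and clearly $T^* = QP\cdots QP$, where the two products have the same length. By assumption $\R(T)$ is closed, so $T$ is a closed range CoR operator, and Theorem \ref{djt4} applies. It gives that $\R(T+T^*)$ is closed if and only if $\R(T)+\R(T^*)$ is closed, and in that case $\R(T)+\R(T^*) = \R(T+T^*)$. Rewriting $T$ and $T^*$ in terms of $P$ and $Q$ yields the claimed statement.
\end{proof}
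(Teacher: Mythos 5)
Your proposal is correct and matches the paper's own argument, which likewise obtains the corollary directly from Theorem \ref{djt1} (to get that $T=PQ\cdots PQ$ is CoR) and Theorem \ref{djt4} (for the equivalence of closedness and the range identity applied to $T$ and $T^*=QP\cdots QP$). No gaps; your write-up just makes the paper's one-line citation explicit.
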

\begin{proof}
Directly from Theorem \ref{djt1} and Theorem \ref{djt4}.
\end{proof}

Note that Corollary \ref{djc3} generalizes \cite[Corollary 4]{Baks2} in infinite-dimensional setting and for products of arbitrary length.

\begin{example}\label{djex1} Let us show that there exists $T\in \mf{P}^2$ such that $T$ is not in any of the classes described in Definition \ref{djd1}. Let $\H=\mathbb{C}^4$ and:
$$P=\begin{bmatrix}1 & 0 & 0 & 0 \\ 0 & 1 & 0 & 0 \\ 0 & 0 & 0 & 0 \\ 0 & 0 & 0 & 0 \end{bmatrix}, \quad \quad Q=\begin{bmatrix} \frac{3}{4} & 0 & \frac{\sqrt{3}}{4} & 0 \\ 0 & 1 & 0 & 0 \\ \frac{\sqrt{3}}{4} & 0 & \frac{1}{4} & 0 \\ 0 & 0 & 0 & 0 \end{bmatrix}.$$
Then $P,Q\in \mf{P}$, while for $T=PQ\in \mf{P}^2$ we have:
$$T=\begin{bmatrix} \frac{3}{4} & 0 & \frac{\sqrt{3}}{4} & 0 \\ 0 & 1 & 0 & 0 \\ 0& 0 & 0 & 0 \\ 0& 0 & 0 & 0\end{bmatrix},\quad \quad T^* = \begin{bmatrix} \frac{3}{4} & 0 & 0 & 0 \\ 0 & 1 & 0 & 0 \\ \frac{\sqrt{3}}{4} & 0 & 0 & 0 \\ 0 & 0 & 0 & 0 \end{bmatrix}.$$
We readily check that $T$ does not belong to any of the classes EP, DR, SR, co-EP, weak-EP.
\end{example}

\begin{example}\label{djex2}
We will show now that the class $\mf{P}^3$ is not contained in the CoR class. Let $\H=\mathbb{C}^4$ and:
$$P=\begin{bmatrix} 1& 0 & 0 & 0 \\ 0 & 1 & 0 & 0 \\ 0 & 0 & 0 & 0\\0 & 0&0 & 0\end{bmatrix},\quad Q=\begin{bmatrix}\frac{1}{2} & 0 & 0 & \frac{1}{2}\\ 0 & 1 & 0 & 0 \\ 0 & 0 & 0 & 0 \\ \frac{1}{2} & 0 & 0 & \frac{1}{2}\end{bmatrix},\quad R=\begin{bmatrix} 1 & 0 & 0 & 0 \\ 0 & \frac{3}{4} & 0 & \frac{\sqrt{3}}{4} \\ 0 & 0 & 1 & 0 \\ 0 & \frac{\sqrt{3}}{4} & 0 & \frac{1}{4}\end{bmatrix}.$$
Then $P,Q,R\in\mf{P}$, while for $T=PQR\in \mf{P}^3$ we have:
$$T=\begin{bmatrix}\frac{1}{2} & \frac{\sqrt{3}}{8} &0 & \frac{1}{8} \\ 0 & \frac{3}{4} & 0 & \frac{\sqrt{3}}{4} \\ 0 & 0 & 0 & 0 \\ 0 &0 & 0 & 0\end{bmatrix},\quad T^*=\begin{bmatrix} \frac{1}{2} & 0 & 0 & 0 \\ \frac{\sqrt{3}}{8} & \frac{3}{4} & 0 & 0 \\ 0 & 0 & 0 & 0 \\ \frac{1}{8} & \frac{\sqrt{3}}{4} & 0 & 0 \end{bmatrix}.$$
We can now check that $x=(1,0,0,0)\in \R(T)\cap \R(T^*)$, however $Tx\not = T^*x$, and so $T$ is not CoR.
\end{example}

%


\end{document}